\newif\ifpdf
\newtheorem{thm}{Theorem}[section]
\newtheorem{lem}[thm]{Lemma}
\newtheorem{rem}[thm]{Remark}
\newtheorem{ass}[thm]{Assumption}
\def\Ind#1#2{#1\setbox0=\hbox{$#1x$}\kern\wd0\hbox to 0pt{\hss$#1\mid$\hss}
\lower.9\ht0\hbox to 0pt{\hss$#1\smile$\hss}\kern\wd0}
\def\Notind#1#2{#1\setbox0=\hbox{$#1x$}\kern\wd0\hbox to 0pt{\mathchardef
\nn="3236\hss$#1\nn$\kern1.4\wd0\hss}\hbox to 0pt{\hss$#1\mid$\hss}\lower.9\ht0
\hbox to 0pt{\hss$#1\smile$\hss}\kern\wd0}
\def\cl{{\rm cl}}
\def\K{\mathcal{K}}
\def\cN{\mathcal{N}}
\def\M{\mathcal{M}}
\def\A{\mathcal{A}}
\def\C{\mathcal{C}}
\def\Cb{\bar{\mathcal{C}}}
\def\S{\mathcal{S}}
\def\hC{\hat{\mathcal{C}}}
\def\G{\mathcal{G}}
\begin{document}

\title[The geometry of Hrushovski constructions, II.]{The geometry of Hrushovski constructions, II. \\ The strongly minimal case.\footnote{Version: 18 March 2011}}

\author{David M. Evans}
\address{School of Mathematics, UEA, Norwich NR4 7TJ, UK.}
\email{d.evans@uea.ac.uk}
\author{Marco S. Ferreira}
\address{School of Mathematics, UEA, Norwich NR4 7TJ, UK.}
\email{masferr@gmail.com}

\begin{abstract}
We investigate the isomorphism types of combinatorial geometries arising from Hrushovski's flat strongly minimal structures and answer some questions from Hrushovski's original paper.

\medskip

\noindent\textit{Keywords:\/}   Stongly minimal set, Hrushovski construction, predimension\newline
\textit{MSC(2010):\/} 03C45, 03C30, 03C13 
\end{abstract}

\maketitle

\section{Introduction}

In this paper, we investigate the isomorphism types of combinatorial geometries arising from Hrushovski's flat strongly minimal structures and answer some questions from Hrushovski's original paper \cite{EH}. It is a sequel to \cite{MFDE1}, but can be read independently of it. In order to describe the main results it will be convenient to summarise some of the results from the previous paper.

Suppose $L$ is a relational language with, for convenience, all relation symbols of arity at least 3 and at most one relation symbol of each arity. Denote by $k(L)$ the maximum of the arities of the relation symbols in $L$ (allowing $k(L)$ to be $\infty$ if this is unbounded). The basic Hrushovski construction defines the \textit{predimension} of a finite $L$-structure to be its size minus the number of atomic relations on the structure. The class $\C_0(L)$ consists of the finite $L$-structures in which this is non-negative on all substructures. There is then an associated notion of \textit{dimension} $d$ and the notion of \textit{self-sufficiency} (denoted by $\leq$) of a substructure. All of this is reviewed in detail in Section 2 below. The class $(\C_0, \leq)$ has an associated \textit{generic structure} $\M_0(L)$ which also carries a dimension function $d$ giving it the structure of  an infinite-dimensional pregeometry. The associated (combinatorial) geometry is denoted by $G(\M_0(L))$. 

In \cite{MFDE1} we showed that:

\begin{enumerate}
\item[(1)] The collection of finite subgeometries of $G(\M_0(L))$ does not depend on $L$ (Theorem 3.8 of \cite{MFDE1}).
\item[(2)] For languages $L, L'$, the geometries $G(\M_0(L))$ and $G(\M_0(L'))$ are isomorphic iff the maximum arities $k(L)$ and $k(L')$ are equal. (Theorem 3.1 of \cite{MFDE1} for $\Leftarrow$ and see also Section 4.2 here; Theorem 4.3 of \cite{MFDE1} gives $\Rightarrow$.)
\item[(3)] The localization of $G(\M_0(L))$ over any finite set is isomorphic to $G(\M_0(L))$ (Theorem 5.5 of \cite{MFDE1}).
\end{enumerate}

For the strongly minimal set construction of \cite{EH}, one takes a certain function $\mu$ (see section 2 here) and considers a subclass $\C_\mu(L)$ of $\C_0(L)$. For appropriate $\mu$ there is a generic structure $\M_\mu(L)$ for the class $(\C_\mu(L), \leq)$ which is strongly minimal. The dimension function given  by the predimension is the same as the dimension in the strongly minimal set and we are interested in the geometry of this. Our main result here is that this process of `collapse' is irrelevant to the geometry: under rather general conditions on $\mu$ we prove:

\begin{enumerate}
\item[(4)] The geometry $G(\M_\mu(L))$ of the strongly minimal set is isomorphic to the geometry $G(\M_0(L))$ (Theorem \ref{main}).
\end{enumerate}

Sections 5.1 and 5.2 of Hrushovski's paper \cite{EH} give variations on the construction which produce strongly minimal sets with geometries different from the $G(\M_0(L))$. However, we show, answering a question from \cite{EH} (see also Section 3 of \cite{Hasson}):

\begin{enumerate}
\item[(5)] the geometries of the strongly minimal sets in Sections 5.1 and 5.2 of \cite{EH} have localizations (over a finite set) which are isomorphic to one of the geometries $G(\M_0(L))$ (for appropriate $L$) (see Section 4.1 here).
\end{enumerate}

The first version of the result in (4) was proved by the second Author in his thesis \cite{MFThesis}: this was for the case where $L$ has a single 3-ary relation symbol (as in the original paper \cite{EH}). The somewhat different method of proof used in Sections 3 and 4 here was found later. It has the advantage of being simpler and more readily adaptable to generalization and proving  the result in (5), however, the class of $\mu$-functions to which it is applicable is slightly more restricted than the result from \cite{MFThesis}: Theorem 6.2.1 of \cite{MFThesis} assumes only that $\mu \geq 1$.

\medskip

In summary, for each $k = 3, 4, \ldots, \infty$ we have a countably-infinite dimensional geometry $\G_k$ isomorphic to $G(\M_0(L))$ where $L$ has maximum arity $k$, and these are pairwise non-isomorphic. The geometry of each  of the new (countable, saturated) strongly minimal sets in \cite{EH} has a localization isomorphic to one of these $\G_k$. Thus, whilst there is some diversity amongst the strongly minimal structures which can be produced by these constructions, the range of geometries which can be produced appears to be rather limited. It would therefore be very interesting to have a characterization of the geometries $\G_k$ in terms of a `geometric' condition (such as  flatness, as in 4.2 of \cite{EH}, for example) and a condition on the automorphism group (such as homogeneity, but possibly with a stronger assumption).

\medskip

\noindent\textit{Acknowledgement:\/} Some of the results of this paper were produced whilst the second Author was supported as an Early Stage Researcher by the Marie Curie Research Training Network MODNET, funded by grant MRTN-CT-2004-512234 MODNET from the CEC. We thank the Referee for drawing our attention to the need to consider local isomorphisms in Theorem \ref{main}.

\section{Hrushovski constructions}\label{sec2}

We give a brief description of Hrushovski's constructions from \cite{EH}. Other presentations can be found in  \cite{FW} and  \cite{B&S}. The book \cite{AP2} of Pillay contains all necessary background material on pregeometries and model theory. The notation, terminology and level of generality is mostly consistent with that used in \cite{MFDE1}.

\subsection{Predimension and pregeometries}

Let $L$ be a relational language consisting of relation symbols  $(R_i : i \in I)$ with $R_i$ of arity $n_i \geq 3$ (and $\vert I \vert \geq 1$). We suppose there are only finitely many relations of each arity here. 

We work with $L$-structures $A$ where each each $R_i$ is symmetric: so we regard the interpretation $R_i^A$ of $R_i$ in $A$ as a set of $n_i$-sets. (By modifying the language, the arguments we give below can be adapted to deal with the case of $n_i$-tuples of not-necessarily-distinct elements: see Section 4.3 here.)

For finite $A$ we let the predimension of $A$ be $\delta(A) = \vert A \vert - \sum_{i \in I} \vert  {R_i}^A\vert$ (of course this depends on $L$ but this will be clear from the context). 

We let 
$\C_0(L)$ be the set of finite $L$-structures $A$ such that $\delta(A') \geq 0$ for all $A'  \subseteq A$. 

Suppose $A \subseteq B \in \C_0(L)$.  We write $A \leq B$ and say that $A$ is \textit{self-sufficient} in $B$  if for all $B'$ with $A \subseteq B' \subseteq B$ we have $\delta(A) \leq \delta(B')$. We will assume that the reader is familiar with the basic properties (such as transitivity) of this notion.

Let $\Cb_0(L)$  be the class of $L$-structures all of whose finite substructures lie in $\C_0(L)$. We can extend the notion of self-sufficiency to this class in a natural way.

Note that if $A \subseteq B \in \Cb_0(L)$ is finite then there is a finite $A'$ with $A \subseteq A' \subseteq B$ and $\delta(A')$ as small as possible. In this case $A' \leq B$ and it can be shown that there is a smallest finite set $C \leq B$ with $A \subseteq C$.  We define the \textit{dimension} $d_B(A)$ of $A$ (in $B$)  to be the minimum value of $\delta(A')$ for all finite subsets $A'$ of $B$ which contain $A$. 

We define the $d$-\textit{closure} of $A$ in $B$ to be: 
$$\cl_B(A)=\{c\in B:d_{B}(Ac)=d_{B}(A)\}$$ 
where, as usual,  $Ac$ is shorthand for $A\cup \{c\}$.

These notions can be relativized. If $A, C \subseteq B \in \C_0(L)$ define $\delta(A/C)$ to be $\delta(A\cup C)- \delta(C)$ and $d_B(A/C) = d_B(A\cup C) - d_B(C)$: the (pre)dimension \textit{over} $C$. In all of this notation, we will suppress the subscript for the ambient structure $B$ if it is clear from the context.

We can coherently extend the definition of $d$-closure to infinite subsets $A$ of $B$ by saying that the $d$-closure of $A$ is the union of the $d$-closures of finite subsets of $A$. 
It can be shown that  $(B,\cl_B)$ is a pregeometry and the dimension function (as cardinality of a basis) equals $d_{B}$ on finite subsets of $B$. We use the notation $PG(B)$ instead of $(B,\cl_B)$, and denote by $G(B)$ the associated (combinatorial) geometry: so the elements of $G(B)$ are the sets $\cl_B(x)\setminus\cl_B(\emptyset)$ for $x \in B \setminus \cl_B(\emptyset)$ and the closure on $G(B)$ is that induced by $\cl_B$. 
Note that if $A \leq B \in \Cb_0(L)$ then for $X \subseteq A$ we have $d_A(X) = d_B(X)$. Thus $G$ can be regarded as a functor from $(\Cb_0(L), \leq)$ to the category of geometries (with embeddings of geometries as morphisms).

If $Y \subseteq B \in \Cb_0(L)$ then the \textit{localization} of $PG(B)$ over $Y$ is the pregeometry with closure operation $\cl_B^Y(Z) = \cl_B(Y\cup Z)$. The corresponding geometry is denoted by $G_Y(B)$. Note that the dimension function here is given by the relative dimension $d_B(./Y)$.

It will be convenient to fix a first order language for the class of pregeometries.  A reasonable choice for this is the language $LPI=\{I_n:n\geq 1\}$ where each $I_n$ is an $n$-ary relational symbol. A pregeometry $(P,\cl)$ will be seen as a structure in this language by taking $I_n^P$ to be the set of independent $n$-tuples in $P$.  Notice that we can recover a pregeometry just by knowing its finite independent sets. Note also that the isomorphism type of a pregeometry is determined by the isomorphism type of its associated geometry and the size of the equivalence classes of interdependence. In the case where these are all countably infinite, it therefore makes no difference whether we consider the geometry or the pregeometry.

\subsection{Self-sufficient amalgamation classes}

If $B_1, B_2 \in \Cb_0(L)$ have a common substructure $A$ then the \textit{free amalgam} $E$ of $B_1$ and $B_2$ over $A$ consists of the disjoint union of $B_1$ and $B_2$ over $A$ and  $R_i^E = R_i^{B_1} \cup R_i^{B_2}$ for each $i \in I$. It is well known that if $A \leq B_1$ then $B_2 \leq E$, so $E \in \Cb_0(L)$, and $(\C_0, \leq)$ is an \textit{amalgamation class}.  It can also be shown that if $A$ is $d$-closed in $B_1$ then $B_2$ is $d$-closed in $E$. 

\smallskip

Suppose $Y \leq Z \in \C_0(L)$ and $Y \neq Z$.  Following \cite{EH}, we say that this is an \textit{algebraic extension} if $\delta(Y) = \delta(Z)$. It is a simply algebraic extension if also $\delta(Z') > \delta(Y)$ whenever $Y \subset Z' \subset Z$. It is a minimally simply algebraic (msa) extension  if additionally $Y' \subseteq Y'\cup (Z\setminus Y)$ is not simply algebraic whenever $Y' \subset Y$. 

It can be shown that for each $n \geq 0$ there are arbitrarily large msa extensions $Y \leq Z$ in $\C_0(L)$ with $\delta(Y) = n$ (this make use of the fact that at least one of the relations $R_i$ has arity at least 3, which is certainly covered by our assumptions on $L$).

The following is trivial, but crucial for us:

\begin{lem}\label{21}
Suppose $Y \leq Z$ is a msa extension.  Then for every $y \in Y$ there is some $w \in \bigcup_{i \in I} R_i^Z$ and $z \in Z\setminus Y$ such that $y, z \in w$. Moreover, if $Z\setminus Y$ is not a singleton and $z \in Z\setminus Y$, then there are at least two  elements of $\bigcup_{i \in I} R_i^Z$ which contain $z$.
\end{lem}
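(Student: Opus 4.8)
### Proof Strategy

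The plan is to exploit the defining properties of an msa extension directly, reasoning by contradiction in each part using minimality and self-sufficiency.

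For the first statement, suppose toward a contradiction that some $y \in Y$ lies in no relation $w \in \bigcup_{i \in I} R_i^Z$ together with any element of $Z \setminus Y$. Set $Y' = Y \setminus \{y\}$. I would argue that $Y' \subseteq Y' \cup (Z \setminus Y)$ is then still a simply algebraic extension, contradicting the "minimally" clause of the msa definition. The key computation: since no relation spanning $Z \setminus Y$ involves $y$, we have $R_i^{Y' \cup (Z\setminus Y)} = R_i^{Y \cup (Z \setminus Y)} \setminus \{w : y \in w\} = R_i^Z \setminus \{w : y \in w\}$, and the relations deleted are exactly those internal to $Y$. Hence $\delta(Z \setminus Y / Y') = \delta(Z \setminus Y / Y)$, which is $0$ because $Y \leq Z$ is algebraic; one checks similarly that the intermediate-set inequality defining "simply algebraic" is inherited. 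There is a small wrinkle: one must confirm $Y' \leq Y' \cup (Z \setminus Y)$ and that $Y' \neq Y' \cup (Z\setminus Y)$, but self-sufficiency of $Y'$ follows since $\delta$ only decreased by a fixed amount (the relations on $Y$ through $y$) uniformly, and $Z \setminus Y \neq \emptyset$ gives the inequality. This contradiction establishes the first claim.

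For the "moreover" part, suppose $Z \setminus Y$ is not a singleton and fix $z \in Z \setminus Y$; suppose $z$ lies in at most one relation $w$. Consider $Z' = Z \setminus \{z\}$. If $z$ lies in \emph{no} relation at all, then $\delta(\{z\}/Z') = 1 > 0$, so $Y \subseteq Y \cup (\{z\}) \subseteq Z'$... but more to the point $\delta(Z') = \delta(Z) - 1 = \delta(Y) - 1 < \delta(Y)$, contradicting $Y \leq Z$ (as $Y \subseteq Z' \subseteq Z$). If $z$ lies in exactly one relation $w$, then $\delta(\{z\}/Z') = 1 - 1 = 0$, so $Y \subset Z' \subsetneq Z$ with $\delta(Z') = \delta(Y)$; this contradicts the "simply algebraic" hypothesis, which demands $\delta(Z') > \delta(Y)$ for every proper intermediate $Z'$ with $Y \subset Z' \subset Z$ — and $Z' \neq Z$ since we removed $z$, while $Z' \supsetneq Y$ since $Z\setminus Y$ is not a singleton so $Z' \setminus Y \neq \emptyset$. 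Either way we reach a contradiction, so $z$ must lie in at least two relations.

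The main obstacle is bookkeeping rather than conceptual: one has to be careful that the sets one passes to genuinely satisfy the strict containments ($Y' \subsetneq Y$, $Y \subsetneq Z' \subsetneq Z$) needed to invoke the msa and simply-algebraic conditions, and that self-sufficiency is preserved where required — but all of this is immediate from $|Z \setminus Y| \geq 2$ (for the moreover part) and from the fact that $\delta$ changes by a controlled amount under the relevant deletions. No deep structural fact is needed beyond the transitivity of $\leq$ and the definitions.
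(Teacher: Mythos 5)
Your argument is correct and is essentially the paper's own proof: for the first claim you delete $y$, observe that $\delta(Y'\cup U)-\delta(Y')=\delta(Y\cup U)-\delta(Y)$ for all $U\subseteq Z\setminus Y$, and contradict minimality; for the second you delete $z$ and contradict simple algebraicity via $\delta(Z\setminus\{z\})\leq\delta(Z)$. The extra bookkeeping about strict containments is fine and does not change the argument.
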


\begin{proof}
Suppose this does not hold for some $y \in Y$. Let $Y' = Y\setminus \{y\}$. Then for every $U \subseteq Z\setminus Y$ we have $\delta(Y'\cup U) - \delta(Y') = \delta(Y \cup U) - \delta(Y)$. So $Y' \subseteq Y'\cup (Z\setminus Y)$ is simply algebraic: contradiction. Similarly, for the `moreover' part, if $z$ is in at most one relation in $\bigcup_{i \in I} R_i^Z$, then $\delta(Z\setminus\{z\}) \leq \delta(Z)$, which contradicts the simple algebraicity.
\end{proof}

We let $\mu$ be a function from the set of isomorphism types of minimally simply algebraic extensions in $\C_0(L)$ to the non-negative integers. The subclass $\C_\mu(L)$ consists of structures in $\C_0(L)$ which, for each msa $Y\leq Z$ in $\C_0(L)$, omit the atomic type consisting of $\mu(Y,Z)+1$ disjoint copies of $Z$ over $Y$. 

We will work with $\mu$ where the following holds:

\begin{ass}[Assumed Amalgamation Lemma]\label{aalemma}
\begin{enumerate}
\item[(i)] If $A \leq B_1, B_2 \in \C_\mu(L)$ and the free amalgam of $B_1$ and $B_2$ over $A$ is not  in $\C_\mu(L)$, then there exists $Y \subseteq A$ and minimally simply algebraic extensions $Y \leq Z_i \in B_i$ (for $i = 1,2$) which are isomorphic over $Y$ and $Z_i\setminus Y \subseteq B_i\setminus A$.
\item[(ii)]  The class $(\C_\mu(L), \leq)$ is an amalgamation class (see below).
\end{enumerate}
\end{ass}

Note that (ii) here follows from (i) (cf. the proof of Lemma 4 in \cite{EH}), and by Section 2 of \cite{EH}, (i) holds if $\mu(Y,Z) \geq \delta(Y)$ for all msa $Y\leq Z$ in $\C_0(L)$. 

\subsection{Generic structures and their geometries}

Suppose $\A$ is a subclass of $\C_0(L)$ such that $(\A, \leq)$ is an amalgamation class: meaning that if $B \in \A$  and $A \leq B$ then $A \in \A$, and if $A \leq B_1, B_2 \in \A$ then there is $C \in \A$ and embeddings $f_i : B_i \to C$ with $f_i(B_i) \leq C$ and $f_1 \vert A = f_2 \vert A$. Then there is a countable structure $\M \in \Cb_0(L)$ satisfying the following conditions:\begin{enumerate}
\item [(G1)]  $\mathcal{M}$ is the union of a chain $A_0 \leq A_1 \leq A_2 \leq \cdots$ of structures in $\A$.
\item [(G2)] (extension property) If $A\leq\mathcal M$ and $A\leq B\in\mathcal A$ then there exists an embedding $g:B\to\mathcal M$ such that  $g(B)\leq\mathcal M$ and $g(a) = a$ for all $a \in A$.
\end{enumerate}
We refer to $\M$ as the \textit{generic structure} of the amalgamation class $(\A, \leq)$: it is determined up to isomorphism by the properties  G1 and G2  (and G1 is automatic for countable structures in $\Cb_0(L)$). Of course, Hrushovski's strongly minimal sets are the generic structures $\M_\mu(L)$ for the amalgamation classes $(\C_\mu(L), \leq)$. We will compare the geometries of these with that of the generic structure $\M_0(L)$ for the amalgamation class $(\C_0(L), \leq)$.

Suppose $(\A, \leq)$ and $(\A', \leq)$ are amalgamation classes, as above. We refer to the following as  the Isomorphism Extension Property, and denote it by $\A \rightsquigarrow \A'$.
\begin{enumerate}
\item[(*)] Suppose $A \in \A$, $A' \in \A'$ and $f : G(A) \to G(A')$ is an isomorphism of geometries, and $A \leq B \in \A$. Then there is $B' \in \A'$ with $A' \leq B'$ and an isomorphism $f' : G(B) \to G(B')$ which extends $f$.
\end{enumerate}

\begin{lem}\label{isolemma}
Suppose $(\A, \leq)$ and $(\A', \leq)$ are amalgamation classes with generic structures $\M$, $\M'$ respectively. Suppose that both extension properties $\A \rightsquigarrow \A'$ and $\A' \rightsquigarrow \A$ hold. Then the geometries $G(\M)$ and $G(\M')$ are isomorphic.
\end{lem}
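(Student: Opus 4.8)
The plan is to build an isomorphism $G(\M) \to G(\M')$ by a back-and-forth argument, using the two Isomorphism Extension Properties to carry out the two directions and using the genericity (property G2) of $\M$ and $\M'$ to realize the resulting configurations inside the generic structures. First I would fix chains $A_0 \leq A_1 \leq \cdots$ with union $\M$ and $A_0' \leq A_1' \leq \cdots$ with union $\M'$, as furnished by G1. The key point to have in hand at the start is the functoriality noted in Section 2.1: if $A \leq B \in \Cb_0(L)$ then $d_A$ and $d_B$ agree on subsets of $A$, so $G(A)$ embeds canonically into $G(B)$ whenever $A \leq B$; in particular each $G(A_n)$ sits inside $G(\M)$ and $G(\M) = \bigcup_n G(A_n)$, and likewise for $\M'$. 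So it suffices to construct a chain of finite self-sufficient substructures $B_0 \leq B_1 \leq \cdots$ of $\M$ with $\bigcup_n B_n = \M$ (i.e. exhausting $\M$), a matching chain $B_0' \leq B_1' \leq \cdots$ exhausting $\M'$, and compatible geometry-isomorphisms $f_n : G(B_n) \to G(B_n')$ whose union is then the desired isomorphism $G(\M) \to G(\M')$.

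The back-and-forth step alternates between two sub-steps. In the forward step, suppose we have $B_n \leq \M$, $B_n' \leq \M'$ and $f_n : G(B_n)\to G(B_n')$, and we wish to ensure $A_n$ (the $n$-th term of the fixed chain exhausting $\M$) is eventually absorbed. Replace $B_n$ by $B_n \cup A_n$ and take a finite self-sufficient closure $B$ of it inside $\M$, so $B_n \leq B \leq \M$ and $B \in \A$. Apply $\A \rightsquigarrow \A'$ to $B_n \in \A$, $B_n' \in \A'$, the isomorphism $f_n$, and $B_n \leq B$: this yields $B^\flat \in \A'$ with $B_n' \leq B^\flat$ and an isomorphism $g : G(B) \to G(B^\flat)$ extending $f_n$. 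Now here is where genericity enters: $B^\flat$ need not literally be a self-sufficient substructure of $\M'$, but since $B_n' \leq \M'$ and $B_n' \leq B^\flat \in \A'$, the extension property G2 for $\M'$ gives an embedding $h : B^\flat \to \M'$ fixing $B_n'$ with $h(B^\flat) \leq \M'$; set $B_{n+1}' = h(B^\flat)$ and $f_{n+1} = (h\text{-induced iso}) \circ g : G(B) \to G(B_{n+1}')$, and $B_{n+1} = B$. This handles one element of the exhausting chain for $\M$; the backward step is symmetric, using $\A' \rightsquigarrow \A$ and G2 for $\M$ to absorb $A_n'$. Interleaving forward and backward steps, $\bigcup_n B_n = \M$ and $\bigcup_n B_n' = \M'$.

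Finally I would check that $f = \bigcup_n f_n$ is a well-defined isomorphism of geometries. Well-definedness and injectivity follow because the $f_n$ are coherent by construction and each $G(B_n) \hookrightarrow G(B_{n+1})$ and $G(B_n') \hookrightarrow G(B_{n+1}')$ are the canonical inclusions (using $B_n \leq B_{n+1} \leq \M$ etc.); surjectivity follows because every point of $G(\M')$ lies in some $G(B_n')$ by the backward steps. That $f$ preserves the closure/independence structure reduces to the fact that a finite set $X \subseteq \M$ is independent iff it is independent in any $B_n \leq \M$ with $X \subseteq B_n$ (again functoriality of $G$ on $(\Cb_0(L),\leq)$), and each $f_n$ is a geometry isomorphism. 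The main obstacle, and the only genuinely non-formal point, is the interface between $\A \rightsquigarrow \A'$ and genericity in the sub-step above: the structure $B^\flat$ produced by the extension property is \emph{abstract}, and one must use G2 to re-embed it self-sufficiently into $\M'$ while preserving the geometry isomorphism already built on $B_n'$ — this works precisely because geometry isomorphisms are transported along self-sufficient embeddings by the functoriality of $G$, so the point is to verify carefully that $h$ (fixing $B_n'$ pointwise) induces a geometry isomorphism $G(B^\flat) \to G(h(B^\flat))$ restricting to the identity on $G(B_n')$, and hence that $f_{n+1}$ genuinely extends $f_n$.
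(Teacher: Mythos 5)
Your proposal is correct and follows essentially the same route as the paper: the paper also observes that $G(A)$ embeds canonically in $G(\M)$ for $A \leq \M$ and then shows that the finite partial geometry-isomorphisms $f : G(A) \to G(A')$ with $A \leq \M$, $A' \leq \M'$ form a back-and-forth system, with the forth step given by $\A \rightsquigarrow \A'$ followed by G2 in $\M'$ to realize the abstract extension self-sufficiently inside $\M'$ (exactly the point you flag as the only non-formal step). Your explicit interleaving of chains exhausting $\M$ and $\M'$ is just the standard unwinding of that back-and-forth system, so there is no substantive difference.
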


\begin{proof}
We have already remarked that if $A \leq \M$, then  the dimension of a subset of $A$ is the same whether computed in $A$ or in $\M$. Thus $G(A)$ is naturally a substructure of $G(\M)$. We claim that the set $\S$ of geometry-isomorphisms 
\[ f : G(A) \to G(A') \]
where $A \leq \M, A' \leq \M'$ are finite
is a back-and-forth system between $G(\M)$ and $G(\M')$. Indeed (for the `forth'), given such an $f : G(A) \to G(A')$ and $A \leq B \leq \M$, there is $A' \leq B' \in \A'$ and an isomorphism $f' : G(B) \to G(B')$ extending $f$, by our assumption $\A \rightsquigarrow \A'$. The extension property G2 in $\M'$ means that we can take $B' \leq \M'$, as required. Similarly we obtain the `back' part from $\A' \rightsquigarrow \A$ and G2 in $\M$.
It follows that  $G(\M)$ and $G(\M')$ are isomorphic.
\end{proof}

\begin{rem} \label{isorem}\rm 
We can adapt this slightly to give a criterion for local isomorphism of $G(\M)$ and $G(\M')$. This is really just about adding parameters to the language, but we make it explicit. Suppose $X \in \A$ and $X' \in \A'$ are fixed finite structures with $X \leq \M$ and $X' \leq \M'$. We write $\A(X) \rightsquigarrow \A'(X')$ for the statement:
\begin{enumerate}
\item[]
Suppose $X \leq A \in \A$, $X' \leq A' \in \A'$ and $f : G_X(A) \to G_X'(A')$ is an isomorphism of geometries, and $A \leq B \in \A$. Then there is $B' \in \A'$ with $A' \leq B'$ and an isomorphism $f' : G_X(B) \to G_{X'}(B')$ which extends $f$.
\end{enumerate}
Here, recall that $G_X(A)$ is the localization of $G(A)$ over $X$, as defined in Section 2.1. It follows as in  Lemma \ref{isolemma} that if $\A(X) \rightsquigarrow \A'(X')$ and $\A'(X') \rightsquigarrow \A(X)$ hold, then $G_X(\M)$ and $G_{X'}(\M')$ are isomorphic.
\end{rem}

\section{Isomorphism of the strongly minimal set geometries}

Throughout, $(\C_0(L), \leq)$ and $(\C_\mu(L), \leq)$ are the  amalgamation classes from the previous section. Note that $(\C_0(L), \leq)$ is an amalgamation class and we are \textit{assuming} that the amalgamation lemma \ref{aalemma} holds for $\C_\mu(L)$. We denote the generic structures by $\M_0(L)$ and $\M_\mu(L)$ respectively: so the latter is Hrushovski's strongly minimal set $D(L,\mu)$.  The geometries are denoted by $G(\M_0(L))$  and $G(\M_\mu(L))$.  We have already remarked that there are arbitrarily large msa extensions  $\emptyset \leq Z$ in $\C_0(L)$, so if $\mu(\emptyset, Z) > 0$ for infinitely many of these, then $\cl_{\M_\mu(L)}(\emptyset)$ is infinite as it contains a copy of each such $Z$. Similarly,  there are infinitely many msa extensions $Y \leq Z \in \C_0(L)$ with $\delta(Y) = 1$, so if  $\mu(Y, Z) > 0$ for all of these, and $X \leq \M_\mu(L)$ is a singleton, then $\cl_{\M_\mu(L)}(X)$ is infinite. Our main result is:

\begin{thm}\label{main} Suppose \ref{aalemma} holds and $\mu(Y,Z) \geq 2$ for all msa $Y \leq Z \in \C_0(L)$ with $\delta(Y) \geq 2$ and $\mu(Y,Z) \geq 1$ when $\delta(Y) = 1$. Suppose that $X \leq \M_\mu(L)$ is  finite and   $\cl_{\M_\mu(L)}(X)$ is infinite. Then $G_X(\M_\mu(L))$ and  $G(\M_0(L))$  are isomorphic geometries. In particular, $G(\M_\mu(L))$ and  $G(\M_0(L))$ are locally isomorphic.
\end{thm}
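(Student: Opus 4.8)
The plan is to deduce the theorem from Remark~\ref{isorem}, applied with $\A = \C_0(L)$ localized over the empty set and $\A' = \C_\mu(L)$ localized over $X$; recall that $\C_\mu(L) \subseteq \C_0(L)$ and that both are amalgamation classes, the latter by Assumption~\ref{aalemma}. Thus it suffices to verify the two Isomorphism Extension Properties $\C_0(L)(\emptyset) \rightsquigarrow \C_\mu(L)(X)$ and $\C_\mu(L)(X) \rightsquigarrow \C_0(L)(\emptyset)$. Granting these, Remark~\ref{isorem} yields $G_X(\M_\mu(L)) \cong G_\emptyset(\M_0(L)) = G(\M_0(L))$; and since $G_X(\M_\mu(L))$ is by definition the localization of $G(\M_\mu(L))$ over the (finite) image of $X$ in the geometry, while $G(\M_0(L))$ is the localization of $G(\M_0(L))$ over the empty set, this already gives the local isomorphism asserted in the final sentence. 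So everything reduces to the two extension properties, and the whole difficulty is to transport a prescribed finite piece of geometry between the two classes while respecting a geometry isomorphism already fixed on a self-sufficient part.

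The direction $\C_\mu(L)(X) \rightsquigarrow \C_0(L)(\emptyset)$ is the more routine one. We are handed $X \leq A \in \C_\mu(L)$, a finite $A' \in \C_0(L)$, an isomorphism $f : G_X(A) \to G(A')$, and $X \leq A \leq B \in \C_\mu(L)$; since $\C_\mu(L) \subseteq \C_0(L)$ we have $B \in \C_0(L)$, and we must build $A' \leq B' \in \C_0(L)$ with $f' : G_X(B) \to G(B')$ extending $f$. Because $\C_0(L)$ carries no omitting constraints, every algebraic extension that occurs inside $B$ can be re-used without obstruction; by induction it is enough to treat the case where $G_X(B)$ has one more point than $G_X(A)$, and then one either adjoins a fresh independent point to $A'$ (if the new geometry point is independent over the placed material) or, if it is dependent, locates inside $B$ a minimally simply algebraic extension witnessing that dependence and glues a copy of it onto $A'$ by a free amalgam, using the free-amalgam lemmas recalled in Section~\ref{sec2} to keep $A' \leq B'$ and $B' \in \C_0(L)$. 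The only real work here is the bookkeeping needed to reproduce the dependence pattern of $G_X(B)$ and to check that the resulting map extends $f$.

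The direction $\C_0(L)(\emptyset) \rightsquigarrow \C_\mu(L)(X)$ is the heart of the matter and is where the hypotheses on $\mu$ and on $\cl_{\M_\mu(L)}(X)$ enter. We are given $A \in \C_0(L)$, $X \leq A' \in \C_\mu(L)$, an isomorphism $f : G(A) \to G_X(A')$, and $A \leq B \in \C_0(L)$, and we must realize $G(B)$ inside $\C_\mu(L)$ over $X$: find $A' \leq B' \in \C_\mu(L)$ and $f' : G(B) \to G_X(B')$ extending $f$. Again reducing to one new geometry point at a time, an independent point is adjoined to $A'$ with no new relations --- this is legal, since by Lemma~\ref{21} an isolated point can lie in no copy of any msa extension, so the one-point extension stays in $\C_\mu(L)$ and is self-sufficient --- while a dependent point $p$ must be realized inside the $d$-closure, over $X$, of the material already placed. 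For the latter, one exhibits a suitable msa extension $Y \leq Z$ of the current structure, with $Y$ a subset of the current $B'$ built from $\cl_{\M_\mu(L)}(X)$-material together with the preimages of the geometry points on which $p$ is to depend (so that $\delta(Y)$ equals the number of those points), and glues on a fresh copy of $Z$ by a free amalgam. The amalgam stays in $\C_\mu(L)$ provided no copy of $Z$ over $Y$ (or over a smaller base, in the sense of Assumption~\ref{aalemma}(i)) has already been used up; this is exactly what the bounds $\mu(Y,Z) \geq 2$ for $\delta(Y) \geq 2$ and $\mu(Y,Z) \geq 1$ for $\delta(Y) = 1$ are designed to guarantee (copies can have been consumed inside $\cl_{\M_\mu(L)}(X)$ and at earlier steps), while the hypothesis that $\cl_{\M_\mu(L)}(X)$ is infinite ensures $X$ already carries enough algebraic structure for the bases $Y$ drawn from $\cl_{\M_\mu(L)}(X)$ to be available. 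Once a finite $B' \in \C_\mu(L)$ with $A' \leq B'$ has been produced, the genericity property G2 of $\M_\mu(L)$ lets us take $B' \leq \M_\mu(L)$.

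The step I expect to be the main obstacle is precisely this construction of algebraic extensions in the hard direction: one must simultaneously (a) remain inside $\C_\mu(L)$, which forces careful tracking of how many copies of each msa type over each base have already been used and is where the precise numerical hypotheses on $\mu$ are indispensable, with Assumption~\ref{aalemma}(i) the tool that pins down how a free amalgam can fail to lie in $\C_\mu(L)$; (b) keep the growing structure self-sufficient over $X$, so that the localized dimension, and hence $G_X(B')$, is computed correctly (here Lemma~\ref{21} controls the shape of the msa extensions and the free-amalgam lemmas give self-sufficiency); and (c) arrange that $p$ realizes the prescribed dependence over the images of the already-placed points and no more, so that the extension of $f$ really is a geometry isomorphism. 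Balancing these three requirements, rather than any single one of them, is the delicate part of the argument.
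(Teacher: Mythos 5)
Your framing is the same as the paper's: reduce via Remark~\ref{isorem} to the two extension properties, with $\C_0(L)\rightsquigarrow\C_\mu(L)(X)$ the hard direction and the converse essentially free because $\C_\mu(L)\subseteq\C_0(L)$. But the entire content of the theorem lies in the construction realizing $G(B)$ over $A'$ inside $\C_\mu(L)$, and what you propose for it --- process one new geometry point at a time, adjoining an isolated point if it is independent and gluing a free copy of a single (minimally simply) algebraic extension over one base $Y$ if it is dependent --- does not work as stated. A new geometry point is a $d$-dependence class, and it can lie on several distinct circuits: for instance $p\in\cl(q_1q_2)\cap\cl(q_3q_4)$ with $\{q_1,q_2\}$, $\{q_3,q_4\}$ independent pairs spanning a $3$-dimensional set. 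Gluing one algebraic extension over a base meeting only $q_1',q_2'$ produces the first dependence but not the second, so the resulting map is not a geometry isomorphism. More generally the dimension of an \emph{arbitrary} union of dependence classes is governed by relations straddling several classes; your sketch never says how these are reproduced. The paper handles this globally: it builds all of $B'$ in one pass, realizing each class $B_i'$ with controlled internal structure and then re-inserting disjoint copies $\rho_j'$ of every cross-class relation of $B$, and Claim~1 is a dimension computation ($\delta(Y'/B_0')=\delta(A'\cap Y'/A_0')+\vert U_1\vert-\vert J\vert$) verifying that \emph{all} $d$-closed sets, not just the circuits through individual points, correspond.

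The second gap is that you misidentify why the numerical hypotheses on $\mu$ suffice. It is not a matter of counting copies ``already used up'' in $\cl_{\M_\mu(L)}(X)$ or at earlier stages. The paper's key device is to first enlarge $\cl(X)$ to a large algebraic extension $B_0'$ (Step~1, using the hypothesis that $\cl_{\M_\mu(L)}(X)$ is infinite), which supplies many pairwise disjoint $(n-2)$-subsets $s_{ij}$ of $B_0'$; each new point outside $A''$ is then attached by at most one or two relations of the form $\{b_{i0},b_{ij}\}\cup s_{ij}$. By Lemma~\ref{21} every element of the base of a msa extension must meet a relation leaving the base, so any msa extension involving the new points has an essentially forced base, and one checks it admits at most one copy (whence $\mu\ge 1$ for $\delta(Y)=1$) or at most two copies in the chain construction for the independent classes (whence $\mu\ge 2$ for $\delta(Y)\ge 2$). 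Without the ``few relations per new point'' device there is no reason a bound as weak as $\mu\ge 2$ should control multiplicities after repeated free amalgamation. So while your reduction is correct, the proof of the extension property --- which is the theorem --- is missing.
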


\begin{proof}
Note that $X$ can be taken as $\emptyset$ or a singleton, by the remarks preceding the theorem. 
We need to verify that the isomorphism extension property of Lemma \ref{isolemma} and Remarks \ref{isorem} holds in both directions. The main part will be to show that 
 $\C_0(L) \rightsquigarrow \C_\mu(L)(X)$. In fact, because of the symmetry of the argument, it will be convenient to take an arbitrary finite $W \leq \M_0(L)$ and show that $\C_0(L)(W) \rightsquigarrow \C_\mu(L)(X)$. 
 
 \medskip

So suppose we are given $W \leq A \leq B \in \C_0(L)$ and $X \leq A' \in \C_\mu(L)$ with an isomorphism $f : G_W(A) \to G_X(A')$. We want to find $B' \in \C_\mu(L)$ with $A' \leq B'$ and an isomorphism $f' : G_W(B) \to G_X(B')$ extending $f$. The main point will be to ensure that each point of $B'\setminus (A' \cup \cl_{B'}(X))$ is involved in only a small number of relations, and this gives us control over the msa extensions in $B'$.

Let $A_0 = \cl_A(W)$ and let $A_1,\ldots, A_r$ be the $d$-dependence classes (over $W$)  on $A\setminus A_0$: the latter are the points of $G_W(A)$. Similarly let $B_0 = \cl_B(W)$ and $B_1, \ldots, B_s$ the $d$-dependence classes (over $W$) on $B\setminus B_0$, with $A_i \subseteq B_i$ for $i = 1,\ldots, r$. List the relations on $B$ which are not contained in $A$ or some $B_0\cup B_j$ as $\rho_1,\ldots, \rho_t$. So these are finite sets. Let $A_0' = \cl_{A'}(X)$ and $A_1',\ldots, A_r'$ be the classes of $d$-dependence (over $X$)  on $A'\setminus A_0'$, labelled so that $f(A_i) = A_i'$. We construct $B_0', B_1', \ldots, B_s'$ with $A_i' \subseteq B_i'$ for $i = 0, \ldots, r$, and $B' = \bigcup_{i = 0}^s B_i'$ in the steps below. 

\medskip

\noindent\textit{Terminology:\/} If $u, v \in E \in \C_0(L)$, say that $u,v$ are \textit{adjacent in $E$} if there exists $w \in \bigcup_i R_i^E$ such that $u,v \in w$.

\medskip

\noindent\textit{Step 1:\/} Construction of $A'' = A' \cup B_0' \in \C_\mu(L)$. \newline
Take $A_0' \leq V \in \C_\mu(L)$ with $\delta(V) = \delta(X)$ and $\vert V \setminus A_0' \vert $ sufficiently large. For example, by our assumption on $X$,  $\C_\mu(L)$ contains arbitrarily large finite algebraic extensions of $X$, so we can take  $V \in \C_\mu(L)$ to be an amalgamation of $A_0'$ and one of these over $X$. Let $A''$ be the free amalgam of $A'$ and $V$ over $A_0'$ and let $B_0'$ be the copy of $V$ inside this. As $A_0'$ is $d$-closed in $A'$ and $A_0' \leq V$ it follows from \ref{aalemma}  that $A'' \in \C_\mu(L)$, $B_0'$ is $d$-closed in $A''$ and $A' \leq A''$. Note that $\delta(A'') = \delta(A')$.

\medskip

\noindent\textit{Step 2:\/} Construction of $B_0' \cup B_i' \in \C_\mu(L)$.\newline
We do this so that $B_0'$ is $d$-closed in $B_0' \cup B_i'$ and $\delta(B_i'\cup B_0'/B_0') = 1$. (As $\delta(B_0' \cup A_i'/B_0') = 1$, it then follows that  $B_0' \cup A_i' \leq B_0'\cup B_i'$ when $1 \leq i \leq r$.) Let $m$ be sufficiently large. Choose some $R_i$: for example $R_1$ of arity $n \geq 3$.  \newline

\smallskip

\textit{Case 1:\/} Suppose $i \leq r$. Pick $b_{i0} \in A_i'$ and let $s_{i1},\ldots, s_{im}$ be disjoint $(n-2)$-subsets  of $B_0'\setminus A_0'$ (we adjust the choice of $V$ in step 1 to accommodate this). Let $B_i' = A_i' \cup \{b_{i1},\ldots, b_{im}\}$ and include as new $R_1$-relations on $B_0' \cup B_i'$ the $n$-sets $\{b_{i0}, b_{ij}\}\cup s_{ij}$ for $1\leq j\leq m$. We need to show that this has the required properties.

First, note that $B_0'\cup A_i' \leq B_0' \cup A_i' \cup \{b_{ij}\}$, so $B_0' \cup A_i' \cup \{b_{ij}\} \in \C_0(L)$. 

Suppose $Y \leq Z$ is a msa extension in $B_0' \cup A_i' \cup \{b_{ij}\}$ not contained in $B_0'\cup A_i'$. So $s_{ij} \cup \{b_{i0},b_{ij}\} \subseteq Z$. If $b_{ij} \not\in Y$ then $Y \leq Z\setminus\{b_{ij}\} < Z$ is algebraic, so $Z\setminus \{b_{ij}\} = Y$ and $Y = s_{ij}\cup \{b_{i0}\}.$ As the elements of $s_{ij}$ are non-adjacent to $b_{i0}$ in $B_0'\cup A_i'$, it follows that there is only one copy of $Z$ over $Y$ in $B_0' \cup A_i' \cup \{b_{ij}\}$. If $b_{ij} \in Y$, then by Lemma \ref{21} $s_{ij}\cup \{b_{i0}\} \not\subseteq Y$. But then there is at most one copy of $Z$ over $Y$ in $B_0' \cup A_i' \cup \{b_{ij}\}$. Note that  $\delta(Y) = \delta(Z) \geq \delta(Z\cap (A_i' \cup B_0')) \geq 1$. So in both cases we meet the requirements for $B_0' \cup A_i' \cup \{b_{ij}\} \in \C_\mu(L)$.

Now note that $B_0' \cup B_i'$ is the free amalgam over $B'_0\cup A_i'$ of the structures $B_0'\cup A_i' \cup \{b_{ij}\}$ (for $j = 1, \ldots, m$). Each $B_0'\cup A_i' \subseteq B_0'\cup A_i' \cup \{b_{ij}\}$ is an algebraic extension and the only msa extension in this with base in $B_0' \cup A_i'$ and which is not contained in $B_0'\cup A_i'$ is $s_{ij}\cup \{b_{i0}\} \leq s_{ij}\cup \{b_{i0}, b_{ij}\}$. So the amalgamation lemma \ref{aalemma} implies that $B_0' \cup A_i' \leq B_0'\cup B_i' \in \C_\mu(L)$. It is clear that $\delta(B_i'/B_0' \cup A_i') = 0$ so $\delta(B_0' \cup B_i') = \delta(B_0' \cup A_i') = \delta(B_0') + 1$.

Finally, note that as $B_0'$ is $d$-closed in $B_0' \cup A_i'$ and $B_0' \cup A_i' \leq B_0' \cup B_i'$, the $d$-closure of $B_0'$ in $B_0'\cup B_i'$ does not contain $b_{i0}$. It then follows easily that $B_0'$ is $d$-closed in $B_0'\cup B_i'$.

\smallskip

\noindent\textit{Case 2:\/} $i > r$. As in Case 1,  let $s_{i1},\ldots, s_{im}$ be  $(n-2)$-subsets  of $B_0'\setminus A_0'$ with no relations on them. Let $B_i' =  \{b_{i1},\ldots, b_{im}\}$ and include as new $R_1$-relations on $B_0' \cup B_i'$ the $n$-sets $\{b_{ij}, b_{i(j+1)}\}\cup s_{ij}$ for $1\leq j\leq m-1$. In this version of the construction we take the $s_{ij}$ to be $s_i$, independent of $j$.

It is clear that $\delta(B_0'\cup B_i'/B_0') = 1$ and if $\emptyset\neq Y \subseteq B_i'$ then $\delta(B_0' \cup Y/B_0') \geq 1$. So $B_0' \leq B_0'\cup B_i'$ and  therefore $B_0'\cup B_i' \in \C_0(L)$, and $B_0'$ is $d$-closed in $B_0' \cup B_i'$. It remains to show that $B_0'\cup B_i' \in \C_\mu(L)$, so suppose $Y \leq Z_1$ is a msa extension in $B_0'\cup B_i'$. If $\delta(Y) = 0$ then $Z_1\subseteq B_0'$ and the same is true of any copy of $Z_1$ over $Y$. Similarly, if $Y \subseteq B_0'$ then all copies of $Z_1$ over $Y$ are contained in $B_0'$ as this is $d$-closed in $B_0'\cup B_i'$. So we can assume that $\delta(Y) \geq 1$ and $b_{ij} \in Y$ for some $j$. By Lemma \ref{21}, we can assume that one of the relations $s_i \cup \{b_{ij},b_{i(j\pm1)}\}$ is a subset of $Z_1$. 

If $Z_1 \setminus Y$ is a singleton then there is at most one other copy $Z_2$ of $Z_1$ over $Y$, and in this case $Y = Z_1 \cap Z_2 = s_i\cup \{b_{ij}\}$. Note that $\delta(Y) \geq 2$ here, so $\mu(Y, Z_1) \geq 2$, by hypothesis.

Now suppose that $Z_1 \setminus Y$ has at least two elements. It will suffice to prove that there is no other copy $Z_2$ of $Z_1$ over $Y$ in $B_0'\cup B_i'$, so suppose there is such a $Z_2$. Take $j$ maximal such that $b_{ij} \in Z_1\cup Z_2$. By Lemma \ref{21}, $b_{ij}$ is in at least two relations in $Z_1\cup Z_2$; but $b_{ij}$ is only in two relations in $B_0'\cup B_i'$ and one of these also involves $b_{i(j+1)}$, so this is in $Z_1\cup Z_2$. This contradicts the choice of $j$.

\medskip

\noindent\textit{Step 3:\/} Other relations on $B'$. \newline
The relations on $B'$ not contained in $A'$ or some $B_0' \cup B_i'$ are $\rho_1', \ldots \rho_t'$. We can choose these to be subsets of $B'\setminus A''$ with   $\rho_i' \cap \rho_j' = \emptyset$ if $i\neq j$, and $\rho_i' \cap B_j' \neq \emptyset$ iff $\rho_i \cap B_j \neq \emptyset$ (for $j \geq 1$). Note that this is possible if $m$ is sufficiently large. We make $\rho_i'$ of the same type as $\rho_i$ (that is, in the same $R_j$). 

This completes the construction of $B'$. We now make a series of claims about it.

\medskip

\noindent\textit{Claim 1:\/} Let $U \subseteq \{1,\ldots, s\}$, and $Y = \bigcup_{i\in U} (B_i\cup B_0)$, $Y' =  \bigcup_{i\in U} (B_i' \cup B_0')$. Then  $Y \cap A$ is $d$-closed in $A$ iff $Y' \cap A'$ is $d$-closed in $A'$, and in this case we have $\delta(Y/B_0) = \delta(Y'/B_0').$

\smallskip

Let $U_0 = \{i\in U : i \leq r\}$ and $U_1 = \{i \in U : i > r\}$. Then $Y\cap A = \bigcup_{i \in U_0} (A_0\cup A_i)$ and $Y' \cap A' = \bigcup_{i \in U_0} (A_0'\cup A_i')$. Because $f$ is an isomorphism of geometries, one of these is $d$-closed (in $A$ or $A'$) iff the other is (remembering that a subset of a geometry is $d$-closed iff any set properly containing it has bigger dimension). So suppose this is the case. We compute that:
\[\delta(Y'/B_0') = \delta(A''\cap Y'/B_0') + \delta(Y'/A''\cap Y') = \delta(A''\cap Y'/B_0') + \vert U_1\vert - \vert J\vert\]
where $J = \{ j : \rho_j' \subseteq Y'\}$. This follows from the fact that $Y'$ consists of $\vert U_0\vert$ sets of $\delta$-value $0$ over $A''\cap Y'$ and $\vert U_1\vert$ sets of $\delta$-value $1$ over $A''\cap Y'$, and an extra $\vert J\vert$ relations $\rho'_j$ between them. Moreover
\[\delta(A'' \cap Y'/B_0') = \delta ((A'\cap Y')\cup B_0'/B_0') = \delta(A' \cap Y'/A_0'),\]
using, for example, the construction of $A''$ as a free amalgam in step 1. Thus we have
\[\delta(Y'/B_0') = \delta(A'\cap Y'/A_0') + \vert U_1\vert - \vert J\vert.\]
Now, by construction (step 3) we have $\rho_j \subseteq Y$ iff $\rho'_j \subseteq Y'$. So an identical calculation shows that 
\[\delta(Y/B_0) = \delta(A\cap Y/A_0) + \vert U_1\vert - \vert J\vert.\]
(This uses the fact that $A, B_0$ are freely amalgamated over $A_0$, which follows from the definitions of $A_0$, $B_0$ and the assumption that $A\leq B$.)
By the isomorphism $f$, we have $d(A'\cap Y'/X) = d(A\cap Y/W)$. If  $A'\cap Y'$, $A\cap Y$ are $d$-closed (in $A'$, $A$ respectively) then $d(A'\cap Y') = \delta(A'\cap Y')$ and $d(A\cap Y) = \delta(A \cap Y)$.  So in this case $\delta(A\cap Y/ A_0) = \delta(A' \cap Y' / A_0')$, as $d(W) = \delta(A_0)$ and $d(X) =\delta(A_0')$. Thus we have  $\delta(Y/B_0) = \delta(Y'/B_0')$, as required. \hfill$\Box_{Claim}$

\medskip

\noindent\textit{Claim 2:\/} $B' \in \C_0(L)$, $B_0'$ and $B_0'\cup B_i'$ are $d$-closed in $B'$ and $A'' \leq B'$. The map $f' : G_W(B) \to G_X(B')$ given by $f'(B_i) = B_i'$ is an isomorphism of geometries which extends $f$. 

\smallskip

First, note that (by construction step 2) if $B_0'\subseteq C \subseteq B'$ and $Y' = \bigcup\{ B_0'\cup B_i' : C \cap B_i' \neq \emptyset\}$, then $\delta(C) \geq \delta(Y')$. If $A'' \subseteq C$ then by Claim 1, $\delta(Y'/B_0') = \delta(Y/B_0)$, where $Y = \bigcup\{ B_0\cup B_i : C \cap B_i' \neq \emptyset\}$. This contains $A$, so as $A \leq B$ we have $\delta(Y) \geq \delta(A)$. By the isomorphism, $\delta(A/A_0) = \delta(A'/A_0') = \delta(A''/B_0')$ (by step 1). Thus $\delta(C/B_0') \geq \delta(Y'/B_0') = \delta(Y/B_0) \geq \delta(A/A_0) = \delta(A''/B_0')$ (using that $\delta(A_0) = \delta(B_0)$). This shows $A''\leq B'$ and therefore (as $\emptyset \leq A''$) we also have $\emptyset \leq B'$.

Now suppose $Y' \geq X$ is $d$-closed in $B'$. Then $B_0' \subseteq Y'$ and as  above, $Y'$ is of the form $\bigcup_{i \in U} (B_0'\cup B_i')$ for some $U$. Moreover $Y'\cap A'$ is $d$-closed in $A'$ and so we can apply Claim 1. It follows from this  that $B_0'$ is $d$-closed in $B'$ and the $d$-closed sets of dimension 1 over $B_0'$ are the $B_0'\cup B_i'$, by using the fact that the corresponding statements hold in $B$. 

It remains to show that $f'$ is an isomorphism of geometries. Let $Y$, $Y'$ be as in Claim 1. We need to show that $Y$ is $d$-closed in $B$ iff $Y'$ is $d$-closed in $B'$. So suppose $Y$ is $d$-closed in $B$. Then $Y\cap A$ is $d$-closed in $A$ and so we can apply Claim 1 to get that $\delta(Y/B_0) = \delta(Y'/B_0')$. Suppose for a contradiction that $Y'$ is not $d$-closed in $B'$. Let $Z'$ be its $d$-closure in $B'$. So $Z' = \bigcup_{i \in Q} (B_0'\cup B_i')$ for some $Q$ with $U \subset Q \subseteq \{1,\ldots, s\}$ and  $\delta(Z') \leq \delta(Y')$. Let $Z = \bigcup_{i \in Q} (B_0\cup B_i)$. So $Y \subset Z$. Because $Z'$ is $d$-closed in $B'$ and therefore $Z'\cap A'$ is $d$-closed in $A'$, we can apply Claim 1 to get that $\delta(Z/B_0) = \delta(Z'/B_0')$. So we have
\[\delta(Z/B_0) = \delta(Z'/B_0') \leq \delta(Y'/B_0') = \delta(Y/B_0)\]
and this contradicts the fact that $Y$ is $d$-closed and $Y \subset Z$. Thus $Y'$ is $d$-closed in $B'$. The argument for the converse implication is the same.\hfill$\Box_{Claim}$

\medskip

\noindent\textit{Claim 3:\/} $B' \in \C_\mu(L)$.

\smallskip

Suppose that $Y \leq Z$ is a minimally simply algebraic extension in $B'$.  First suppose $\delta(Y) = \delta(Z) \leq 1$. Then $d(Y) \leq 1$, so $Y \subseteq B_0'\cup B_i'$ for some $i$ and as $B_0'\cup B_i'$ is $d$-closed in $B'$, any copies of $Z$ over $Y$ in $B'$ are contained in $B_0'\cup B_i'$. So there are at most $\mu(Y,Z)$ of these as $B_0'\cup B_i' \in \C_\mu(L)$. 

Now suppose that $\delta(Y) \geq 2$ and suppose for a contradiction  that $Z_i$ (for $i = 1,\ldots, \mu(Y,Z)+1$) are  disjoint copies in $B'$ of $Z$ over $Y \subseteq B'$ (meaning that the sets $Z_i \setminus Y$ are disjoint, of course). 

If $y \in Y$ then $y$ is in some relation in $R_k^Z\setminus R_k^Y$ (for some $k \in I$) by Lemma \ref{21}. Thus $y$ is  in at least three relations in $R_k^{B'}$  (one in each $R_k^{Z_i}\setminus R_k^Y$). By inspection of the construction one therefore sees that $y \in A''$ or $y = b_{ij}$ for some $i > r$. In the latter case, two of the (at most) three relations in $B'$ which involve $b_{ij}$ are $s_{i}\cup \{b_{i(j-1)}, b_{ij}\}$ and $s_{i}\cup \{b_{i(j+1)}, b_{ij}\}$. So we can assume that the first is a subset of $Z_1$ (and not a subset of $Y$) and the second is a subset of $Z_2$. But this implies that $s_{i} \subseteq Y$. However, there is no other relation which contains $\{b_{ij}\} \cup s_{i}$: contradicting the fact that $Z_3$ is a copy of $Z_1$ over $Y$. 

Thus $Y \subseteq A''$. As $A'' \in \C_\mu(L)$ not all of the $Z_i$ are subsets of $A''$, so we can assume that $Z_1 \not\subseteq A''$. As $A'' \leq B'$ we have $Y \subseteq A'' \cap Z_1 \leq Z_1$ so (by the simplicity of the extension) $Z_1 \cap A'' = Y$. 

Note that $Z_1$ is in the $d$-closure of $Y$ so we cannot have $Y \subseteq B_0'$. Let $y \in Y \setminus B_0'$. This is adjacent in $Z_1$ to some $z \in Z_1\setminus Y$. So $y \in A'' \setminus B_0'$ is adjacent in $B'$ to $z \in B'\setminus A''$. Inspection of the construction shows that $y = b_{i0}$ (for some $i \leq r$) and $z = b_{ij}$. Then  the adjacency of $y$ and $z$ in $Z_1$ forces $s_{ij} \subseteq Z_1$, and so $s_{ij} \subseteq Y$ (as $A''\cap Z_1 = Y$). But then $Y \leq Y \cup \{b_{ij}\}$ is a simply algebraic extension in $Z_1$. As $Y \leq Z_1$ is a minimally simply algebraic extension, this implies $Y = \{b_{i0}\}\cup  s_{ij}$  and $Z_1  = \{b_{i0}, b_{ij}\}\cup s_{ij}$. However, there is no other relation in $B'$ which contains this $Y$ (by construction), so we have a contradiction. \hfill$\Box_{Claim}$

\medskip

Claims 2 and 3 finish the proof of the isomorphism extension property  $\C_0(L) (W)\rightsquigarrow \C_\mu(L)(X)$.

\medskip

For the other direction, we can use the same construction (it is a special case of the the above as $\C_\mu(L) \subseteq \C_0(L)$). Of course, in this case we do not need Claim 3.
\end{proof}

\section{Further isomorphisms}
\subsection{Localization of non-isomorphic geometries}
In this subsection the language $L$ has just a  single $3$-ary relation $R$. We often suppress $L$ in the notation.

In 5.2 of \cite{EH} Hrushovski varies his strongly minimal set construction to produce examples where the model-theoretic structure of the strongly minimal set can be read off from the geometry: lines of the geometry have three points, and colinear points correspond to instances of the ternary relation. He thereby produces continuum-many non-isomorphic geometries of (countable, saturated) strongly minimal structures, but asks whether these examples are \textit{locally} isomorphic. We show that this is the case: in fact, localizing any of them over a 3-dimensional set gives a geometry isomorphic to $G(\M_0(L))$, the geometry of the generic structure for $(\C_0(L), \leq)$.

 In 5.2 of \cite{EH}, Hrushovski considers
\[\K_0 = \{A \in \C_0(L) : B \leq A \mbox{ for all $B \subseteq A$ with $\vert B \vert \leq 3$}\}.\]

The class $(\K_0, \leq)$ is an amalgamation class: one shows that if $A \leq B_1, B_2 \in \K_0$ and the free amalgam of $B_1$ and $B_2$ over $A$ is not  in $\K_0$, then there exist $a, a' \in A$ and $b_i \in B_i\setminus A$ with $R(a,a', b_i)$ holding in $B_i$ (for $i=1,2$).

More generally, given a function $\mu$ as before, we can consider $\K_\mu = \K_0 \cap \C_\mu(L)$ and for appropriate $\mu$, the class $(\K_\mu, \leq)$ will satisfy Assumption \ref{aalemma}. In fact, we only need to define $\mu(Y,Z)$ for $\delta(Y) \geq 3$. For suppose $Y \leq Z$ is a minimally simply algebraic extension in $\K_0$ and $\delta(Y) = \delta(Z) \leq 2$. Then $Z$ has at most 3 elements: otherwise there is a subset $W\subseteq Z$ of size 3 with $W \not\in R^Z$, and then $W$ is not self-sufficient in $Z$, contradicting the definition of $\K_0$. It follows that the value of $\mu(Y,Z)$ is irrelevant for such $Y \leq Z$: the multiplicity is already controlled by the definition of $\K_0$. 

So we shall assume: 

\begin{ass}\label{HAmalgLemma} With the above notation:
\begin{enumerate}
\item[(i)] If $A \leq B_1, B_2 \in \K_\mu$ and the free amalgam of $B_1$ and $B_2$ over $A$ is not  in $\K_\mu$, then either there exist $a, a' \in A$ and $b_i \in B_i\setminus A$ with $R(a,a', b_i)$ holding in $B_i$ (for $i=1,2$), or there exists $Y \subseteq A$ with $\delta(Y) \geq 3$ and msa extensions $Y \leq Z_i \in B_i$ (for $i= 1,2$) which are isomorphic over $Y$ and $Z_i\setminus Y \subseteq B_i\setminus A$.
\item[(ii)]  The class $(\K_\mu, \leq)$ is an amalgamation class.
\end{enumerate}
\end{ass}

Again, (ii) follows from (i) here and the condition $\mu(Y,Z) \geq \delta(Y)$ (for $\delta(Y) \geq 3$) guarantees that (i) holds.

Denote the generic structure of $(\K_\mu, \leq)$ by $\cN_\mu$.  The $d$-closure of two points in $\cN_\mu$ has size $3$ (as above), so certainly $G(\cN_\mu)$ and $G(\M_0(L))$ are non-isomorphic. In fact,  we can recover the relation $R$ from the geometry $G(\cN_\mu)$ as the $3$-sets with dimension 2. Thus different $\mu$ give different geometries. It can be shown that there are infinitely many msa extensions $Y \leq Z \in \K_0$ with $\delta(Y) = 3$, so if $\mu(Y, Z) \geq 1$ for infinitely many of these, then $\cl_{\cN_\mu}(X)$ is infinite whenever $X \leq \cN_\mu$ consists of 3 independent points.

We show:

\begin{thm}\label{surprise}
Suppose $\mu(Y,Z) \geq 3$ for all msa $Y \leq Z$ in $\K_0$ with $\delta(Y) \geq 3$ and Assumption \ref{HAmalgLemma} holds.  Let $X \leq \cN_\mu$ and $d(X) = 3$. Then the localization $G_X(\cN_\mu)$ is isomorphic to $G(\M_0(L))$. 
\end{thm}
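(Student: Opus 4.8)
The plan is to apply the Isomorphism Extension Property machinery of Lemma~\ref{isolemma} and Remark~\ref{isorem}, exactly as in the proof of Theorem~\ref{main}, but with $(\C_\mu(L),\leq)$ replaced by $(\K_\mu,\leq)$. Concretely, I would show that $\C_0(L)(W) \rightsquigarrow \K_\mu(X)$ for arbitrary finite $W \leq \M_0(L)$ and that $\K_\mu(X) \rightsquigarrow \C_0(L)(W)$ (the latter being essentially free, since $\K_\mu \subseteq \C_0(L)$ and one uses the generic extension property G2 for $\M_0(L)$). Combined with Lemma~\ref{isolemma} and Remark~\ref{isorem} this yields $G_X(\cN_\mu) \cong G_W(\M_0(L))$; taking $W=\emptyset$ and using fact~(3) from the introduction (the localization of $G(\M_0(L))$ over a finite set is isomorphic to $G(\M_0(L))$) gives the stated conclusion. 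Note that since $d(X)=3$ and $\cl_{\cN_\mu}(X)$ is infinite by the remark preceding the theorem, $X$ is a legitimate base for the localization.

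For the main direction $\C_0(L)(W) \rightsquigarrow \K_\mu(X)$, I would reuse verbatim the construction from the proof of Theorem~\ref{main}: given $W \leq A \leq B \in \C_0(L)$ and $X \leq A' \in \K_\mu$ with a geometry isomorphism $f : G_W(A) \to G_X(A')$, build $B' = \bigcup_{i=0}^{s} B_i'$ in the same three steps (Step~1: enlarge $A_0' = \cl_{A'}(X)$ to get $A'' = A' \cup B_0'$ with $\delta(A'')=\delta(A')$ and $|B_0' \setminus A_0'|$ large; Step~2: attach each $B_i'$ by long chains/fans of $R$-relations so that $\delta(B_0' \cup B_i'/B_0') = 1$ and each new point lies in at most two relations; Step~3: add the remaining relations $\rho_i'$ disjointly in $B' \setminus A''$ matching the pattern of the $\rho_i$ in $B$). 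Claims 1 and 2 (that $B' \in \C_0(L)$, that $B_0'$ and the $B_0' \cup B_i'$ are $d$-closed, that $A'' \leq B'$, and that $f'(B_i)=B_i'$ is a geometry isomorphism extending $f$) go through unchanged. The new work is a modified Claim~3 showing $B' \in \K_\mu$, i.e.\ $B' \in \K_0 \cap \C_\mu(L)$.

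For $B' \in \C_\mu(L)$ the argument of Claim~3 in Theorem~\ref{main} adapts, but now I must push the analysis to msa extensions $Y \leq Z$ with $\delta(Y) \in \{1,2,3,\ldots\}$ more carefully: the point is that $\mu$ only needs to be respected when $\delta(Y) \geq 3$, since for $\delta(Y) \leq 2$ the multiplicity bound is automatic from membership in $\K_0$ (as observed in the discussion preceding Assumption~\ref{HAmalgLemma}: such a $Z$ has at most $3$ elements and is a single $R$-triple). So I would (i) verify $B' \in \K_0$, i.e.\ every $3$-subset of $B'$ is self-sufficient --- this should follow from $A' \in \K_0$, from $A'' \leq B'$ with $A''$ built as a free amalgam preserving $\K_0$, and from the Step~2 construction in which every new point $b_{ij}$ has $\delta(b_{ij}/{\rm its\ relation}) \geq 1$ so no small non-self-sufficient set is created; and (ii) redo the msa count exactly as in Theorem~\ref{main}'s Claim~3 but only invoking the hypothesis $\mu(Y,Z) \geq 3$ in the case $\delta(Y) \geq 3$, the cases $\delta(Y) \leq 2$ being absorbed into $\K_0$. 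The hypothesis $\mu(Y,Z) \geq 3$ (rather than $\geq 2$) is exactly what is needed because in the chains built in Step~2, Case~2, the worst case $Z \setminus Y$ a singleton produces up to two extra copies over $Y = s_i \cup \{b_{ij}\}$ with $\delta(Y) \geq 2$; but here such $Y$ must actually have $\delta(Y) \geq 3$ for the $\mu$-constraint to bite, and for $\delta(Y)=2$ we instead need $Y$ to violate $\K_0$-membership of any purported copy --- I expect this boundary bookkeeping between the $\K_0$-constraint and the $\mu$-constraint to be the main obstacle, and the reason the theorem requires $\mu \geq 3$ for $\delta(Y) \geq 3$.

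Finally, for the reverse direction $\K_\mu(X) \rightsquigarrow \C_0(L)(W)$: given $X \leq A' \leq B' \in \K_\mu$ and $W \leq A \in \C_0(L)$ with a geometry isomorphism $G_X(A') \to G_W(A)$, one wants $B \in \C_0(L)$ with $A \leq B$ and a matching isomorphism $G_W(B) \to G_X(B')$. Here the same construction applies and is genuinely easier, since no analogue of Claim~3 is required ($\C_0(L)$ imposes no multiplicity constraint); one only needs Claims~1 and~2, which as noted are language-independent and go through. This completes the verification that both extension properties hold, hence $G_X(\cN_\mu) \cong G(\M_0(L))$.
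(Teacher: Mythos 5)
Your high-level strategy is the same as the paper's (verify the isomorphism extension property in both directions, reusing the three-step construction of Theorem \ref{main}, with the reverse direction coming for free from $\K_\mu\subseteq\C_0(L)$), but there is a genuine gap at the heart of the forward direction: the construction of Theorem \ref{main} cannot be reused \emph{verbatim}, because its Step 2, Case 2 produces a structure that is not in $\K_0$. In that step all the relations $\{b_{ij},b_{i(j+1)}\}\cup s_i$ share a common point $s_i$; with a ternary $R$ the set $C=\{s_i,b_{i1},b_{i2},b_{i3}\}$ then carries two relations, so $\delta(C)=2$, while the $3$-subset $\{s_i,b_{i1},b_{i3}\}$ is in no relation and hence is not self-sufficient in $C$. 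So $B'\notin\K_0$ and the whole argument collapses. The paper's proof modifies Step 2, Case 2 by taking the $s_{ij}$ pairwise \emph{distinct}, and your proposal never notices that this change is needed. Relatedly, your explanation of why the hypothesis is $\mu\geq 3$ (extra copies over $Y=s_i\cup\{b_{ij}\}$ in the shared-$s_i$ chain) describes a configuration that does not occur once the construction is corrected; the actual reason is that in the modified construction a point of $B'\setminus A''$ can lie in up to three relations (two chain relations plus one $\rho_j'$), so Lemma \ref{21} only forces $Y\subseteq A''$ when there are four disjoint copies of $Z$ over $Y$, i.e.\ when $\mu(Y,Z)+1\geq 4$.

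The second substantive omission is the verification that $B'\in\K_0$, which you explicitly defer (``I expect this boundary bookkeeping \dots to be the main obstacle'') and for which your one-line heuristic (each new point has positive $\delta$ over its relation) is not a proof. This is precisely the new content of the paper's argument: it first establishes that each $B_0'\cup B_i'\cup B_j'$ lies in $\K_\mu$ and is self-sufficient in $B'$ (their Claim 3), and then, for a $3$-set $D\subseteq C$ with $\delta(C)=2$, runs a case analysis showing $C\cap B_0'=\emptyset$, that there is no adjacency between $C\cap A''$ and $C\setminus A''$, and that the count $2\geq|C\setminus A''|-q\geq 2q$ (where $q$ is the number of $\rho_j'$ inside $C$) leaves only degenerate possibilities. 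Your treatment of the $\delta(Y)\leq 2$ msa extensions (absorbed by $\K_0$-membership) and your routing of the conclusion through an arbitrary $W$ plus the localization result of \cite{MFDE1} are both fine, but without the modified Step 2 and a real proof of $B'\in\K_0$ the proposal does not yet constitute a proof.
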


\begin{proof}
We may assume that $X$ consists of 3 points (and no relations). By the remarks preceding the theorem, $X$ has arbitrarily large finite algebraic extensions in $\K_\mu$. We show that the isomorphism extension property $\C_0(L) \rightsquigarrow \K_\mu(X)$ holds.

Suppose we are given $A \leq B \in \C_0(L)$ and $X \leq A' \in \K_\mu$ and  an isomorphism $f : G(A) \to G_X(A')$. We want to find $B' \in \K_\mu$ with $A' \leq B'$ and an isomorphism $f' : G(B) \to G_X(B')$ extending $f$. This is very similar to the the construction of $B'$ in the proof of Theorem \ref{main} and we will only indicate what needs to be modified and provide extra argument as required.

Let $A_0 = \cl_A(\emptyset)$ and let $A_1,\ldots, A_r$ be the $d$-dependence classes on $A\setminus A_0$: the latter are the points of $G(A)$. Similarly let $B_0 = \cl_B(\emptyset)$ and $B_1, \ldots, B_s$ the $d$-dependence classes on $B\setminus B_0$, with $A_i \subseteq B_i$ for $i = 1,\ldots, r$. List the relations on $B$ which are not contained in $A$ or some $B_0\cup B_j$ as $\rho_1,\ldots, \rho_t$. So these are 3-sets and note that each of them intersects three different $B_i$. Let $A_0' = \cl_{A'}(X)$ and $A_1',\ldots, A_r'$ be the classes of $d$-dependence over $X$ on $A'\setminus A_0'$, labelled so that $f(A_i) = A_i'$. We construct $B_0', B_1', \ldots, B_s'$ with $A_i' \subseteq B_i'$ for $i = 0, \ldots, r$, and $B' = \bigcup_{i = 0}^s B_i'$ in the following way.

\medskip

\noindent\textit{Step 1:\/} Construction of $A'' = A' \cup B_0' \in \K_\mu$. \newline
This is as before, but we need to take $V \in \K_\mu$: we can do this because algebraic extensions of $X$ can be arbitrarily large.

\medskip

\noindent\textit{Step 2:\/} Construction of $B_0' \cup B_i' \in \K_\mu$.\newline
The construction is as in Theorem \ref{main} for $i \leq r$. In the case $i > r$ we vary the construction by taking the $s_{ij}$ to be distinct. The proofs that $B_0'$ is $d$-closed in $B_0'\cup B_i'$ are as before; as are the arguments which show that if $Y \leq Z$ is a msa extension in $\K_0$ with $\delta(Y) \geq 3$ then there are at most $\mu(Y,Z)$ copies of $Z$ over $Y$ in $B_0'\cup B_i'$. So it remains to show that $B_0' \cup B_i' \in \K_0$. 

If $i \leq r$, then using the amalgmation lemma \ref{HAmalgLemma} as in Step 2, Case 1 of Theorem \ref{main}, it will suffice to show that $B_0'\cup A_i' \cup \{b_{ij}\} \in \K_0$. This is the free amalgam of $\{s_{ij}, b_{i0}, b_{ij}\}$ and $B_0'\cup A_i'$ over $\{s_{ij},b_{i0}\}$. So we can apply \ref{HAmalgLemma} (because $\{s_{ij},b_{i0}\}$ is in no relation in $B_0'\cup A_i'$).

Now suppose $i > r$. We analyse the possibilities for $\delta(Y)$ when $Y \subseteq B_0'\cup B_i'$, $Y \not\subseteq B_0'$ and $\vert Y \vert > 1$. As $Y \cap B_0'$ is $d$-closed in  $Y$ we have $\delta(Y) > \delta(Y \cap B_0')$. If $Y \cap B_0' = \emptyset$ then by the construction, $\delta(Y) = \vert Y \vert$. If $Y \cap B_0'$ is a singleton then $Y$ has at most one relation (because the $s_{ij}$ are distinct), so $\delta(Y) > 1$ and $\delta(Y) \geq \vert Y \vert -1$. In the remaining case, $\delta(Y \cap B_0') \geq 2$ (as $B_0' \in \K_0$), so $\delta(Y) \geq 3$. Thus, $Y$ consists of 2 points, or is 3 points in a relation, or has $\delta(Y) \geq 3$. It follows that $B_0'\cup B_i' \in \K_0$.

\medskip

\noindent\textit{Step 3:\/} Other relations on $B'$. \newline
As before. 

\medskip

\noindent\textit{Claim 1:\/} Let $U \subseteq \{1,\ldots, s\}$, and $Y = \bigcup_{i\in U} (B_i\cup B_0)$, $Y' =  \bigcup_{i\in U} (B_i' \cup B_0')$. Then  $Y \cap A$ is $d$-closed in $A$ iff $Y' \cap A'$ is $d$-closed in $A'$, and in this case we have $\delta(Y) = \delta(Y'/B_0').$

As before.

\medskip
\noindent\textit{Claim 2:\/}
$B' \in \C_0(L)$, $B_0'$ and $B_0'\cup B_i'$ are $d$-closed in $B'$ and $A'' \leq B'$. The map $f' : G(B) \to G_X(B')$ given by $f'(B_i) = B_i'$ is an isomorphism of geometries which extends $f$. 

As before, using Claim 1.

\medskip

\noindent\textit{Claim 3:\/} If $i\neq j$ then $B_0'\cup B_i' \cup B_j' \in \K_\mu$ and $B_0' \cup B_i'\cup B_j' \leq B'$.

\smallskip

By construction $B_0'\cup B_i' \cup B_j'$  is the free amalgam of $B_0'\cup B_i'$ and $B_0' \cup B_j'$ over $B_0'$, and $B_0'$ is $d$-closed in each. So the first statement follows from the assumed amalgamation lemma \ref{HAmalgLemma}. We have $\delta(B_0' \cup B_i'\cup B_j'/ B_0') = 2$. Moreover, as $B_0'\cup B_i'$ is $d$-closed in $B'$ (Claim 2), if $B_0' \cup B_i'\cup B_j'\subseteq Z$ then $\delta(Z/B_0') \geq 2$. This gives the second statement.\hfill$\Box_{Claim}$

\medskip

\noindent\textit{Claim 4:\/} $B' \in \K_0$.

\smallskip

We need to show that if $D \subseteq B'$ has size at most 3 then $D \leq B'$. If $\vert D \vert \leq 2$ then $D \subseteq B_0' \cup B_i' \cup B_j'$ for some $i, j$ and it follows from Claim 3 that $D \leq B_0'\cup B_i'\cup B_j' \leq B'$. So suppose $D$ has size 3 and $D \subseteq C$ with $\delta(C) < \delta(D)$. We must have $\delta(C) = 2$ (as any two points of $D$ are self-sufficient in $C$ and have $\delta$-value $2$). As $A'' \in \K_0$ there is an $i$ such that $C \cap (B_i'\setminus A'') \neq \emptyset$. Note that  $C$ is not contained in $B_0'\cup B_i'$ (because this is in $\K_\mu$), so as $B_0'$ is $d$-closed in $B_0' \cup B_i'$ and the latter is $d$-closed in $B'$, we have 
\[0 \leq \delta(C\cap B_0') < \delta(C \cap (B_0'\cup B_i')) < \delta(C) = 2.\]
Thus $\delta(C\cap B_0') = 0$, so $C \cap B_0' = \emptyset$.

It then follows from  Step 2 of the construction that there is no adjacency in $C$ between points of $C\cap A''$ and points of $C \setminus A''$. Let $q = \vert\{j : \rho_j' \subseteq C\}\vert$. Then (using $A'' \leq B'$; so $C \cap A'' \leq C$)
\[2 \geq \delta(C/C\cap A'') = \vert C \setminus A''\vert - q \geq 2q\]
as the $\rho_j'$ are disjoint. If $q = 0$ then $C$ is $C\cap A''$ together with some isolated points, and this is in $\K_\mu$ (so not possible in this situation). If $q = 1$ then $C$ consists of 3 points in a single relation and this has no subset of the form required for $D$. \hfill$\Box_{Claim}$

\medskip

\noindent\textit{Claim 5:\/} $B' \in \K_\mu$\newline
 We already know that $B' \in \K_0$, so we need to show that $B' \in \C_\mu(L)$, at least as far as msa extensions $Y \leq Z$ with $\delta(Y) \geq 3$ are concerned. So suppose $Z_1,\ldots, Z_4$ are disjoint copies of $Z$ over $Y$ in $B'$. Then each element  $y \in Y$ is in at least $4$ relations (using \ref{21}, as before), so by construction, $y \in A''$. Thus $Y \subseteq A''$ and the rest of the proof is just as in Claim 3 of Theorem \ref{main}. \hfill$\Box_{Claim}$

 \medskip

Claims 2 and 5 finish the proof of one direction of the isomorphism extension property. The direction $\K_\mu(X) \rightsquigarrow \C_0(L)$ follows from the property $\C_\mu(L)(X) \rightsquigarrow \C_0(L)$ proved in Theorem \ref{main}.

 \end{proof}

 \begin{rem}\rm Note that $\K_0$ can be considered as $\K_\mu$ where $\mu(Y,Z)$ is formally given the value $\infty$ for all msa $Y \leq Z \in \K_0$. Thus the above argument also shows that the geometry of $\cN_0$, the generic for $(\K_0, \leq)$, is locally isomorphic to $G(\M_0(L))$.
 \end{rem}

\begin{rem} \rm Another variation is given in 5.1 of \cite{EH}. Let $k \geq 2$ and consider the language $L$ with a single $(k+1)$-ary relation symbol $R$. Let 
 \[\C_0'(L) = \{A \in \C_0(L): \delta(B) \geq\min(\vert B\vert , k)\,\, \forall B \subseteq A\}.\]
So if $C \subseteq A \in \C_0'(L)$ and $\vert C \vert \leq k$ then $C \leq A$. Hrushovski observes that $(\C_0'(L), \leq)$ is a free amalgamation class and that the assumed amalgamation lemma \ref{aalemma} holds for $(\C_\mu', \leq)$, for suitable $\mu \geq 2$. The generic structures here are strongly minimal and any $k$ points are independent. So the geometries are again different from that of $\M_0(L)$. However, they are again locally isomorphic. To see this we proceed as in Theorem \ref{surprise}, but take $X$ to be a set of size $k$. The construction and proof are essentially the same as before, except for in Claim 4 where to show that $B' \in \C_0'(L)$ we modify the argument as follows.

Suppose $C \subseteq B'$ has $\delta(C) < k$ and $\vert C \vert \geq k+1$. Then for some $i$ we have:
\[0 \leq \delta(C\cap B_0') < \delta(C\cap (B_0' \cup B_i')) <\delta(C) \leq k-1.\]
So $\delta(C\cap B_0') \leq k-3$ and therefore $\vert C \cap B_0'\vert \leq k-3$. Then by construction there is no adjacency in $C$ between points of $C \cap A''$ and points of $C\setminus A''$. So (with $q$ as before):
\[k-1 \geq \delta(C/C\cap A'') = \vert C \setminus A''\vert - q\geq kq.\]
Thus $q = 0$ and we have a contradiction. 
\end{rem}

 \subsection{Changing the language and predimension}
 Recall that the language $L$ consists of relation symbols $\{R_i: i \in I\}$ with $R_i$ of arity $n_i$ (and only finitely many symbols of each arity). Suppose that $L_0 = \{ R_i : i \in I_0\}$ is a sublanguage with the property that for every $i \in I$ there is $j \in I_0$ such that $n_i \leq n_j$. For example, if $I$ is finite we can take $L_0$ to consist of a relation symbol of maximal arity in $L$.  The following is essentially Theorem 3.1 of \cite{MFDE1}, but working with sets rather than tuples: we omit most of the details of the proof.
 
 \begin{thm}
 The geometries $G(\M_0(L))$ and  $G(\M_0(L_0))$ are isomorphic.
 \end{thm}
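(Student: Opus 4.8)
The plan is to apply Lemma~\ref{isolemma} with $\A = (\C_0(L),\leq)$ and $\A' = (\C_0(L_0),\leq)$, so it suffices to verify the two isomorphism extension properties $\C_0(L)\rightsquigarrow\C_0(L_0)$ and $\C_0(L_0)\rightsquigarrow\C_0(L)$. We identify an $L_0$-structure with the $L$-structure in which the symbols of $L\setminus L_0$ receive empty interpretations; since $\delta$ is unchanged by this, $\C_0(L_0)\subseteq\C_0(L)$ and the two notions of $\leq$ agree. Both directions will be obtained by running the construction from the proof of Theorem~\ref{main} with $W = X = \emptyset$ and with the target multiplicity class replaced by the (unconstrained) class $\C_0$ of the relevant language; in particular there is no analogue of Claim~3 to establish, only the $\C_0$-membership, $d$-closedness and geometry-isomorphism assertions of Claims~1 and~2.

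For $\C_0(L)\rightsquigarrow\C_0(L_0)$: given $A\leq B\in\C_0(L)$, $A'\in\C_0(L_0)$ and an isomorphism $f:G(A)\to G(A')$, decompose $B$ as in Theorem~\ref{main} into $B_0 = \cl_B(\emptyset)$, the $d$-dependence classes $B_1,\ldots,B_s$ and the crossing relations $\rho_1,\ldots,\rho_t$, and build $B' = \bigcup_{i=0}^s B_i'$ by the same three steps. In Step~1 we take $B_0'$ to be a large finite algebraic extension of $A_0' = \cl_{A'}(\emptyset)$ over $\emptyset$ inside $\C_0(L_0)$; these exist and can be taken arbitrarily large because $L_0$ has a relation symbol of arity at least $3$, hence $\C_0(L_0)$ has arbitrarily large msa extensions $\emptyset\leq Z$. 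The only change to Steps~2 and~3 is in the relation symbols used: in Step~2 we use a fixed $R_j$ with $j\in I_0$ (some such $j$ has $n_j\geq 3$), while in Step~3 a crossing relation $\rho_i$, which is an $n_k$-set for some $k\in I$, is replaced by an $R_{j'}$-instance $\rho_i'$ for a $j'\in I_0$ with $n_{j'}\geq n_k$ — such $j'$ exists by hypothesis — where $\rho_i'$ meets each $B_l'$ (for $l\geq 1$) in the same number of points as $\rho_i$ meets $B_l$ and its remaining $n_{j'}-n_k$ coordinates are fresh points placed inside $B_0'$. This respects the constraints of the construction, which only demand that $\rho_i'\cap B_l'\neq\emptyset$ iff $\rho_i\cap B_l\neq\emptyset$ for $l\geq 1$; one simply enlarges the choice of $V$ in Step~1 to also accommodate these pad points.

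The key observation making this work is that, since all pad coordinates lie in the base $B_0'$, every relative-$\delta$ computation in the proof of Theorem~\ref{main} is unaffected: for $U\subseteq\{1,\ldots,s\}$ one still has $\rho_i'\subseteq\bigcup_{l\in U}(B_0'\cup B_l')$ iff $\rho_i$ meets only classes indexed by $U$ iff $\rho_i\subseteq\bigcup_{l\in U}(B_0\cup B_l)$, so Claim~1 (the identity $\delta(Y/B_0)=\delta(Y'/B_0')$) goes through verbatim, and with it Claim~2: $B'\in\C_0(L_0)$, the sets $B_0'$ and $B_0'\cup B_i'$ are $d$-closed in $B'$, $A''\leq B'$, and $f':G(B)\to G(B')$, $f'(B_i)=B_i'$, is an isomorphism of geometries extending $f$. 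For the reverse direction $\C_0(L_0)\rightsquigarrow\C_0(L)$ the same construction applies and is easier: as $L_0\subseteq L$, one builds $B'$ using only $L_0$-relations (Step~2 with some $R_j$, $j\in I_0$; Step~3 leaving each $\rho_i$ of its original type, which is already an $L_0$-relation since $B\in\C_0(L_0)$), so no padding is required and automatically $B'\in\C_0(L_0)\subseteq\C_0(L)$. The main point requiring care is precisely the bookkeeping just described — checking that the pad points in $B_0'$ disturb neither the $d$-closedness of $B_0'$ and of the $B_0'\cup B_i'$ in $B'$ nor the self-sufficiency $A''\leq B'$ — but since in each relevant inequality the pads sit inside the base over which $\delta$ is taken relatively, this reduces to the corresponding statements already established in the proof of Theorem~\ref{main}.
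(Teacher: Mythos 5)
Your proposal is correct and follows essentially the same route as the paper: run the construction of Theorem~\ref{main} to verify both isomorphism extension properties, changing only Step~3 so that a crossing $k$-set $\rho_i$ becomes a $k'$-set for a symbol of $L_0$ with $k'\geq k$, and noting that the reverse direction is a special case since $\C_0(L_0)\subseteq\C_0(L)$. The only (harmless) deviation is that you place the padding coordinates in $B_0'$ rather than keeping $\rho_i'\subseteq B'\setminus A''$ as in the original Step~3; your check that this leaves the relative-$\delta$ computations of Claims~1 and~2 unchanged is what is needed, and both variants work.
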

 
 \begin{proof}
 We can use the construction in Theorem \ref{main} to show that $\C_0(L) \rightsquigarrow \C_0(L_0)$ holds. In step 3 of the construction, if $\rho_i$ is a $k$-set then we take $\rho_i'$ to be a $k'$-set with $k' \geq k$: the condition on $L_0$ allows us to do this. Claims 1 and 2 of Theorem \ref{main} then go through exactly as previously.  The direction $\C_0(L_0) \rightsquigarrow \C_0(L)$ follows as in Theorem \ref{main}.
 \end{proof}
 
 \begin{rem}\rm
 Theorem 3.1 of \cite{MFDE1} works with a predimension of the form:
 \[\delta_\alpha(A) = \vert A \vert - \sum_{i \in I} \alpha_i\vert R_i^A\vert,\]
 where the $\alpha_i$ are natural numbers. We can adapt the argument here to deal with such predimensions. For example, suppose $I$ is finite and $R_1$ is of maximal arity and $\alpha_1 = 1$. Let $L_0$ consist of $R_1$. Then, as in Theorem 3.1 of \cite{MFDE1}, $G(\M_0(L))$ is isomorphic to $G(\M_0(L_0))$. To show that $\C_0^\alpha(L) \rightsquigarrow \C_0(L_0)$ (where $\C_0^\alpha(L)$ is defined using the predimension $\delta_\alpha$) we perform the same construction except that in step 3,  if $\rho_j$ is of type $R_i$ then we add $\alpha_i$ corresponding $\rho_j'$ (but still disjoint etc).
 
\end{rem}
 
 \subsection{Sets versus tuples} We have chosen to work with structures $A$ where the relations $R_i^A$ are sets of $n_i$-sets. As was done in \cite{MFDE1} we could also have worked more generally with structures $A$ where the $R_i^A$ are sets of $n_i$-tuples and the predimension is still given by $\vert A \vert - \sum _i \vert R_i^A\vert$. Let $\hC_0(L)$ denote the class of these finite structures with $\emptyset \leq A$. 
 
 \begin{thm}
The geometries of the generic structures of the amalgamation classes $(\C_0, \leq)$ and $(\hC_0(L), \leq)$ are isomorphic.
 \end{thm}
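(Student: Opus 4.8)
The plan is to apply Lemma~\ref{isolemma}: writing $\hat{\M}_0(L)$ for the generic structure of $(\hC_0(L),\leq)$, it suffices to verify the two isomorphism extension properties $\C_0(L)\rightsquigarrow\hC_0(L)$ and $\hC_0(L)\rightsquigarrow\C_0(L)$. For each direction I would simply re-run the construction from the proof of Theorem~\ref{main}, taking $W=X=\emptyset$ and with no collapse: since the target class is now cut out by a predimension inequality alone (no $\mu$-function), Claim~3 of that proof is not needed, and only Claims~1 and 2 — which give $B'\in\C_0(L)$ (resp.\ $\hC_0(L)$), $A''\leq B'$, and that $f'(B_i)=B_i'$ is a geometry isomorphism extending $f$ — are required. (Incidentally $\cl_{\hat{\M}_0(L)}(\emptyset)$ and $\cl_{\M_0(L)}(\emptyset)$ are both infinite, by the existence of arbitrarily large msa extensions of $\emptyset$ as in the remarks preceding Theorem~\ref{main}; but this is not used, as we only compare geometries.)

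The one place where the set-valued versus tuple-valued nature of the relations enters the construction is Step~3, the handling of the relations $\rho_1,\dots,\rho_t$ on $B$ that are not contained in $A$ or in any $B_0\cup B_j$. The small observation needed is that every such $\rho_j$ has entries (for a tuple $\rho_j$, I identify it with its set of entries) lying in at least two distinct $d$-dependence classes $B_i$ with $i\geq 1$: if all entries of $\rho_j$ outside $B_0$ lay in a single class $B_i$, then $\rho_j\subseteq B_0\cup B_i$. For $\hC_0(L)\rightsquigarrow\C_0(L)$, where $B$ is a tuple-structure but $B'$ must be a set-structure, one then takes $\rho_j'$ to be an honest $n_i$-element subset of $B'\setminus A''$, of the same type $R_i$ as $\rho_j$, with the $\rho_j'$ pairwise disjoint and $\rho_j'\cap B_k'\neq\emptyset$ iff $\rho_j$ has an entry in $B_k$ (for $k\geq 1$); this is possible since $m$ is taken large and $\rho_j$ meets at least two such classes (if $\rho_j$ has repeated entries it simply meets fewer than $n_i$ classes, and the relevant $B_k'$ receives more than one point of $\rho_j'$, which does no harm). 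For the reverse direction $\C_0(L)\rightsquigarrow\hC_0(L)$ the recipe is identical, except $B'$ is now a tuple-structure and $\rho_j'$ may be taken to be any tuple with distinct entries and the same support pattern — since $\rho_j$ is a set it already has $n_i$ distinct entries, so nothing further need be arranged.

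With this choice, Claims~1 and 2 of the proof of Theorem~\ref{main} go through verbatim, reading ``entry of the tuple $\rho_j$'' for ``element of the set $\rho_j$'' where relevant: the predimension and dimension computations there only count numbers of relation instances and use freeness of the relevant amalgams, and the equivalence $\rho_j\subseteq Y\iff\rho_j'\subseteq Y'$ persists because each side asserts that $\rho_j$ meets only those classes $B_i$ indexed by the set defining $Y$, together with the fact that $\rho_j'$ is disjoint from $B_0'$. Hence both extension properties hold and Lemma~\ref{isolemma} gives $G(\M_0(L))\cong G(\hat{\M}_0(L))$. The main (and only mild) obstacle is exactly the point just made: convincing oneself that none of the predimension bookkeeping in Claims~1 and 2 secretly used that the relations were honest sets, which it does not, once one records that the ``extra'' relations $\rho_j$ always span at least two of the geometry-point-classes and re-reads Step~3 accordingly.
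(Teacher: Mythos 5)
Your proposal is correct and follows essentially the same route as the paper, which (very briefly) proves this by re-running the construction of Theorem \ref{main} without collapse and modifying Step 3 so that each $n_i$-tuple $\rho_j$ is replaced by an $n_i$-set on fresh $d$-dependent points (and vice versa), exactly as you describe. Your observations that each listed $\rho_j$ must meet at least two of the classes $B_i$ ($i\geq 1$) and that repetitions or re-enumerations are absorbed by the disjointness of the $\rho_j'$ are precisely the points the paper's sketch is alluding to.
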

 
 \begin{proof}
 This is the usual sort of proof using the construction. For example, to show $\hC_0(L) \rightsquigarrow \C_0(L)$ we replace an  $n_i$-tuple $\rho_j$ (in $R_i^B\setminus R_i^{A}$) by an $n_i$-set, using the new $d$-dependent points to eliminate repetitions of points in the tuple or different enumerations of the same set. 
 \end{proof}
 
 \medskip

\end{document}